\newtheorem{theorem}{Theorem}
\newenvironment{packed_enum}{
\begin{enumerate}
\topsep=0pt plus 2pt minus 4pt
  \setlength{\itemsep}{1pt}
  \setlength{\parskip}{0pt}
  \setlength{\parsep}{0pt}
}{\end{enumerate}}
\begin{document}

\begin{center}
  \textbf{\LARGE On random coarsening and its applications}\vspace*{3ex}\\
  Pawan Kumar\footnote{This work was done when the author was previously funded by Fonds de la recherche scientifique (FNRS)(Ref: 2011/V 6/5/004-IB/CS-15) at ULB, Brussels and post-doctoral funding at KU Leuven, Belgium}\\
  % Institut Henri Poincare (IHP) - UMS 839 (CNRS/UPMC)\\
  % 11, rue Pierre et Marie Curie - 75231 Paris Cedex 05 \\
  % France \\
  Department of computer science \\
  KU Leuven \\
  Leuven, Belgium \\
  \textsf{pawan.kumar@cs.kuleuven.be}\vspace*{0.5ex}\\
%  {\small Accepted for 10th IMACS symp. 2011, Morocco}
\end{center}
      % title, author, ...
\section*{Abstract}
In this paper, we use the Poincare separation theorem for estimating
the eigenvalues of the fine grid. We propose a randomized version of
the algorithm where several different coarse grids are constructed
thus leading to more comprehensive eigenvalue estimates. The proposed
algorithm is suited for modern day multicore and distributed
processing in the sense that no communication is required between the
processors, however, at the cost of possible redundant computation.

       % abstract
\section{Introduction}
The problem of obtaining an approximation to eigenvalues and
eigenvectors appears in several applications including data mining,
chemical research, vibration analysis of mechanical structures, image
processing etc. On the other hand, singular value decomposition has
many useful applications in signal processing and statistics. For
iterative methods, an estimate of extreme eigenvalue is useful for
rapid Chebychev method \cite{saad96} and in the construction of
deflation preconditioners. An estimate of extreme eigenvalue leads to
an estimate of condition number for symmetric positive matrix.

Poincare separation theorem \cite{rao} states that the eigenvalues of
coarse grid matrix $P^TAP$ are ``sandwiched'' between the eigenvalues
of the fine grid matrix $A$. In this paper, we consider samples of
randomized coarsening scheme, i.e., the fine grid matrix is coarsened
using special randomized interpolation operators $P$ leading to
several samples of coarse grids preferably with different distribution
of eigenvalues. We then compute the eigenvalues of these coarse grid
matrices. When a sufficiently large number of coarse grids are taken
then the smallest eigenvalue (singular value) of the fine grid is
approximated by the smallest of the eigenvalues (singular values) of
the coarse grid matrices and the largest eigenvalue (singular value)
of the fine grid is approximated by the largest of the eigenvalues
(singular values) of the coarse grid matrices.  On the other hand, it
is also possible to use the eigenvalues(singular values) of the coarse
grid matrices as shifts for computing the eigenvalues(singular values)
for the fine grid matrix.

The proposed algorithm is well suited for modern day multi-core and
multiprocessor era since coarsening and subsequently the eigenvalue
(singular value) of the resulting coarse grid could be computed
independently without performing any inter node communication. The
only communication required is when we gather the eigenvalues
(singular values) computed by the processors. Given that communication
often becomes more costly relative to computation it is essential to
degisn algorithms that minimize communication as much as possible even
at the cost of small redundant computation. This is the main reason
behind the method proposed in this paper. However, we do not show any
results for parallel case and here we only focus our study in
understanding the quality of our approach.

The algorithms proposed has some similarity with the Jacobi-Davidson
(JD) method \cite{sle} in the sense that both of these method try to
approach the the eigenvalues of the fine grid via coarse grid,
however, contrary to the sophisticated Jacobi-Davidson method, the
method proposed is based on brute force approach, i.e., the method
relies on creating enough coarse grid samples such that one of these
coarse grid leads to the desired eigenvalue or singular
value. Moreover, unlike JD method where the matrix P keeps growing by
one column during the outer iteration in our method $P$ is fixed thus the
coarse grid matrix $P^TAP$ is also fixed for each coarse grid sample.

This paper is organized as follows. In section (2), we review
essential theorems and motivation behind the algorithms proposed. In
section (3), we explain steps from clustering to obtaining the coarse
matrix. All the algorithms for computing the eigenvalues and
eigenvectors are presented in section (4), here we also show some the results of 
some numerical experiments and finally section (5) concludes this paper.

\section{Poincar\'e separation theorem}

Let $\lambda_i$ denote an arbitrary eigenvalue of $A$.  The trace of
an $n \times n$ matrix $A$ is defined to be the sum of the elements on
the main diagonal of A, i.e.,
\[ tr(A) = \sum_{i=1}^{n}a_{ii}\]. If $f(x) = (x-\lambda_1)^{d_1}
\cdots (x-\lambda_k)^{d_k}$ is the characteristic polynomial of a 
matrix $A$, then tr(A) is defined as follows
\[ tr(A)=d_1\lambda_1+ \cdots + d_k\lambda_k. \] We have the following
relation
\begin{align}
  \sum_{i=1}^{n}a_{ii} = \sum_{i=1}^{n}\lambda_{i}
\end{align}

Let $K^T$ denote the transpose of a matrix $K$ and let $I_k$ denote
the identity matrix of size $k$.  Here we will see how poincar\'e
separates eigenvalues of two grids.

\begin{theorem}[Poincar\'e]\label{Th:poincare}
  Let $A$ be a symmetric $n \times n$ matrix with eigenvalues
  $\lambda_1 \le \lambda_2 \le \dots \le \lambda_n $, and let $P$ be a
  semi-orthogonal $n \times k$ matrix with the property that
  $P^TP=I_k$. The eigenvalues $\mu_1 \le \mu_2 \le \cdots \le
  \mu_{n-k+i}$ of $P^TAP$ are separated by the eigenvalues of $A$ as
  follows
  \begin{align}
    \lambda_i \le \mu_{i} \le \lambda_{n-k+i}.
  \end{align}
\end{theorem}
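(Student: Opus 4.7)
The plan is to prove both inequalities via the Courant--Fischer min--max characterisation of the eigenvalues of a symmetric matrix. Recall that for any symmetric $n \times n$ matrix $A$ with eigenvalues $\lambda_1 \le \cdots \le \lambda_n$, Courant--Fischer gives two dual formulas,
\[
\lambda_i \;=\; \min_{\dim S = i}\; \max_{x \in S,\; \|x\|=1}\; x^T A x \;=\; \max_{\dim S = n-i+1}\; \min_{x \in S,\; \|x\|=1}\; x^T A x,
\]
where $S$ ranges over linear subspaces of $\mathbb{R}^n$. Applying the same formulas to $B := P^T A P$ describes its eigenvalues $\mu_1 \le \cdots \le \mu_k$ via subspaces $T$ of $\mathbb{R}^k$. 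The strategy is to push subspaces of $\mathbb{R}^k$ into $\mathbb{R}^n$ through $P$ and compare the two resulting optimisations.

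The hypothesis $P^T P = I_k$ means that $P$ has orthonormal columns, so the map $y \mapsto Py$ is a linear isometry from $\mathbb{R}^k$ into $\mathbb{R}^n$: it is injective and preserves norms, since $\|Py\|^2 = y^T P^T P y = \|y\|^2$. Consequently, for any subspace $T \subset \mathbb{R}^k$ the image $PT$ has the same dimension as $T$, and for $y \in T$ with $\|y\|=1$ one has $y^T B y = (Py)^T A (Py)$ with $\|Py\|=1$.

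For the lower bound $\lambda_i \le \mu_i$ I use the min--max form. Taking $T$ of dimension $i$ and substituting $x = Py$ gives
\[
\mu_i \;=\; \min_{\dim T = i}\; \max_{x \in PT,\; \|x\|=1}\; x^T A x \;\ge\; \min_{\dim S = i}\; \max_{x \in S,\; \|x\|=1}\; x^T A x \;=\; \lambda_i,
\]
because the collection $\{PT : \dim T = i\}$ is a sub-family of the collection of all $i$-dimensional subspaces of $\mathbb{R}^n$, so the outer minimum over the smaller family can only be larger. For the upper bound $\mu_i \le \lambda_{n-k+i}$ I would use the dual max--min form. Since $n-(n-k+i)+1 = k-i+1$, the relevant subspaces of $\mathbb{R}^n$ in the formula for $\lambda_{n-k+i}$ have dimension $k-i+1$, which matches exactly the dimension of the $T \subset \mathbb{R}^k$ appearing in the dual formula for $\mu_i$. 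The same embedding argument through $P$ then yields $\mu_i \le \lambda_{n-k+i}$.

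The conceptual content is captured entirely by the fact that $P$ is an isometry; the main obstacle is purely the bookkeeping needed to align the dimensions in the two Courant--Fischer formulations so that both the lower and upper bounds emerge from the same subspace-embedding argument. Nothing beyond the symmetry of $A$ and the identity $P^T P = I_k$ is required.
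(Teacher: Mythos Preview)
Your argument via Courant--Fischer is correct and is the standard route to this result. The paper itself does not supply a proof at all; it simply cites the textbook of Rao and Rao \cite{rao}, where one finds essentially the same min--max argument (or the equivalent reduction to Cauchy interlacing for a principal submatrix after extending the columns of $P$ to an orthonormal basis). Your dimension bookkeeping for the upper bound is right: the max--min formula for $\mu_i$ in $\mathbb{R}^k$ runs over subspaces of dimension $k-i+1$, and their images under $P$ are $(k-i+1)$-dimensional subspaces of $\mathbb{R}^n$, which is precisely the dimension appearing in the max--min formula for $\lambda_{n-k+i}$; writing that inequality out explicitly, as you did for the lower bound, turns the sketch into a complete proof.
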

\begin{proof}
  The theorem is proved in \cite{rao}.
\end{proof}
In Figure (\ref{poincare_sky10_zoom}), we show a part of the spectrum
where eigenvalues of a coarse grid is distributed among the fine grid
eigenvalues.

\begin{figure}
  \caption{Poincare separates for sky 10$\times$10$\times$10 zoomed}
  \label{poincare_sky10_zoom}
  \begin{center}
    \includegraphics[scale=0.3]{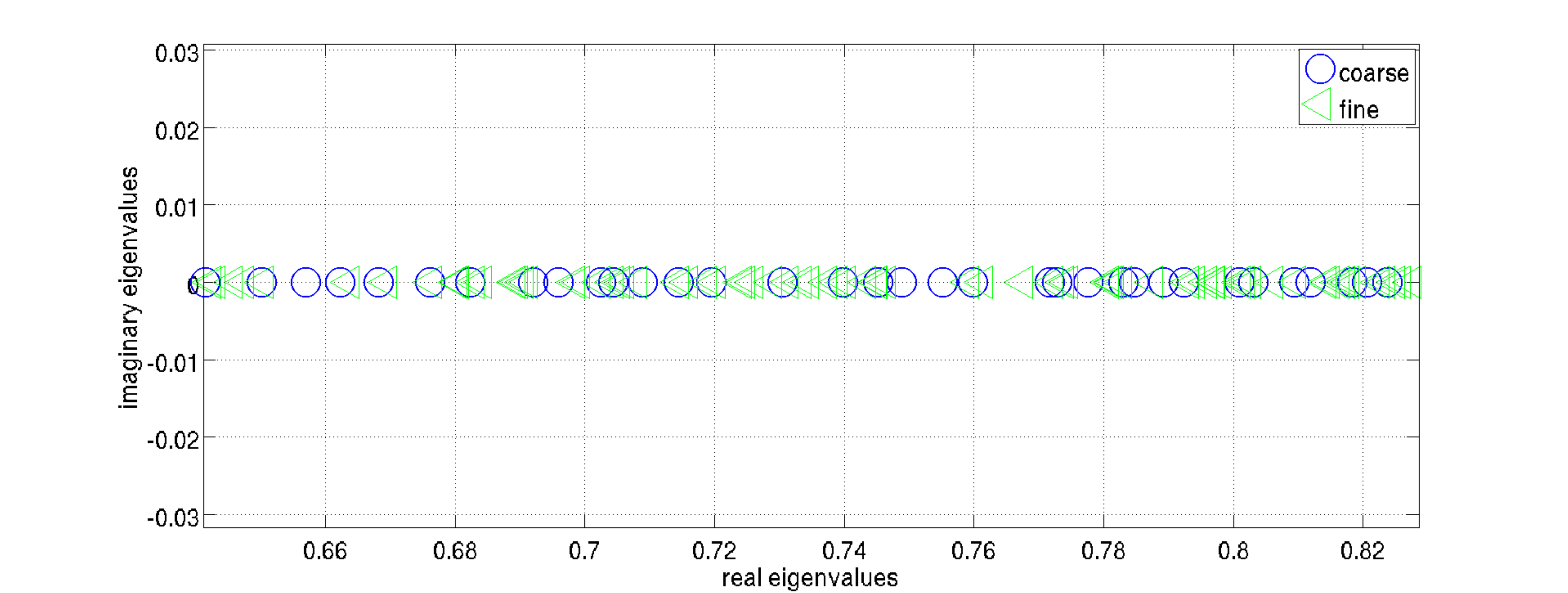}
  \end{center}
\end{figure}

\begin{theorem}\label{Th:poincare_cor1}
  If $A$ is a real symmetric $n \times n$ matrix with eigenvalues
  $\lambda_1 \le \lambda_2 \le \cdots \le \lambda_n$, then the
  following holds
  \begin{align}
    \min_{P^TP=I_k} tr(P^TAP) &= \sum_{i=1}^{k}\lambda_i, \\
    \max_{P^TP=I_k} tr(P^TAP) &= \sum_{i=1}^{k}\lambda_{n-k+i}.
  \end{align}
\end{theorem}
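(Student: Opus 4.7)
The plan is to combine the Poincar\'e separation theorem (Theorem~\ref{Th:poincare}) with the basic identity that the trace of a symmetric matrix equals the sum of its eigenvalues, and then verify that each extremum is actually attained by an explicit choice of $P$.

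First I would denote the eigenvalues of $P^TAP$ by $\mu_1\le\mu_2\le\cdots\le\mu_k$. Since $P^TAP$ is symmetric, relation (1) gives $tr(P^TAP)=\sum_{i=1}^{k}\mu_i$. Applying Theorem~\ref{Th:poincare} index by index and summing the resulting $k$ inequalities yields the two-sided bound
\begin{align}
\sum_{i=1}^{k}\lambda_i \;\le\; \sum_{i=1}^{k}\mu_i \;=\; tr(P^TAP) \;\le\; \sum_{i=1}^{k}\lambda_{n-k+i},
\end{align}
valid for every semi-orthogonal $P$ with $P^TP=I_k$. This gives one direction of both equalities.

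Next I would show that the two bounds are tight. Since $A$ is real symmetric, by the spectral theorem there exists an orthonormal basis $v_1,\dots,v_n$ of eigenvectors with $Av_i=\lambda_i v_i$. Taking $P_{\min}=[v_1\mid\cdots\mid v_k]$ gives $P_{\min}^TP_{\min}=I_k$ and $P_{\min}^TAP_{\min}=\mathrm{diag}(\lambda_1,\dots,\lambda_k)$, so $tr(P_{\min}^TAP_{\min})=\sum_{i=1}^{k}\lambda_i$, matching the lower bound. Symmetrically, $P_{\max}=[v_{n-k+1}\mid\cdots\mid v_n]$ is semi-orthogonal with $tr(P_{\max}^TAP_{\max})=\sum_{i=1}^{k}\lambda_{n-k+i}$, matching the upper bound. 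Combining these attainments with the inequalities above proves both identities.

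I do not expect a genuine obstacle here, since once Theorem~\ref{Th:poincare} is granted the argument is essentially one line plus an explicit witness. The only subtlety worth stating carefully is that summing the Poincar\'e inequalities is legitimate because they hold simultaneously for the \emph{same} ordered spectrum of $P^TAP$, and that the attaining matrices $P_{\min}$ and $P_{\max}$ are indeed semi-orthogonal, which follows from the orthonormality of the chosen eigenvectors.
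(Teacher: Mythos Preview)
Your argument is correct and complete. Note, however, that the paper does not actually prove this theorem: it simply cites \cite{rao}. So there is no ``paper's own proof'' to compare against beyond the reference. Your proof is the natural self-contained derivation---bound $tr(P^TAP)$ via Theorem~\ref{Th:poincare} and exhibit eigenvector matrices to show attainment---and is almost certainly what \cite{rao} does as well. In that sense you have supplied what the paper only outsourced.
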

\begin{proof}
  The theorem is proved in \cite{rao}.
\end{proof}
\begin{theorem}
  If $A$ is a real symmetric $n \times n$ matrix with eigenvalues
  $\lambda_1 \le \lambda_2 \le \cdots \le \lambda_n$, and if the
  following conditions are satisfied \vspace{-5mm}
  \begin{packed_enum}
  \item $tr(P^TAP)$ is minimum and
  \item $P^TAP$ has simple eigenvalues
  \end{packed_enum}
  then we have
  \[ \mu_i = \lambda_i, \quad 1 \le i \le k, \] where $\mu_1 \le \mu_2
  \le \cdots \le \mu_{k}$ are eigenvalues of $P^TAP$.
\end{theorem}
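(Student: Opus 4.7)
The plan is to reduce this statement to the two preceding theorems by combining a global (trace) identity with the pointwise Poincaré bounds. Under the hypothesis $P^TP = I_k$, Theorem \ref{Th:poincare_cor1} pins down the minimum of $tr(P^TAP)$ as $\sum_{i=1}^{k}\lambda_i$, while Theorem \ref{Th:poincare} provides the componentwise lower bounds $\mu_i \ge \lambda_i$. The proof should be nothing more than pairing these two facts.

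Concretely, I would first rewrite the trace in terms of the eigenvalues of the coarse matrix using the relation $tr(P^TAP) = \sum_{i=1}^{k}\mu_i$, which is just equation (1) applied to $P^TAP$. By assumption (1) the left hand side equals its minimum value, and by Theorem \ref{Th:poincare_cor1} that minimum is $\sum_{i=1}^{k}\lambda_i$. Thus I would record
\begin{align}
\sum_{i=1}^{k}\mu_i \;=\; \sum_{i=1}^{k}\lambda_i.
\end{align}

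Next I would bring in Theorem \ref{Th:poincare}, which gives $\lambda_i \le \mu_i$ for every $i \in \{1,\dots,k\}$. Summing these $k$ inequalities yields $\sum \lambda_i \le \sum \mu_i$, with equality forced by the trace identity above. Since each summand in the left-hand side is dominated by the corresponding summand on the right, equality of the sums forces equality in every individual inequality, i.e.\ $\mu_i = \lambda_i$ for $1 \le i \le k$, which is the claim.

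The main obstacle, as far as I can see, is essentially bookkeeping rather than substance: the whole argument is a two-line "sum of nonnegative gaps is zero" deduction once the two cited theorems are in place. The hypothesis that $P^TAP$ has simple eigenvalues does not seem to enter the inequality-chase directly; I suspect its role is to guarantee that the indexing $\mu_1 < \mu_2 < \cdots < \mu_k$ is unambiguous (so that the identification with $\lambda_1,\dots,\lambda_k$ is a genuine one-to-one correspondence rather than a matching between multisets), and I would note this in passing at the end of the write-up rather than try to use it in the main deduction.
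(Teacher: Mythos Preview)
Your proposal is correct and follows essentially the same approach as the paper: combine the trace identity from Theorem~\ref{Th:poincare_cor1} with the pointwise bounds $\mu_i \ge \lambda_i$ from Theorem~\ref{Th:poincare}, and conclude that equality of the sums forces termwise equality. The only cosmetic difference is that the paper phrases the final step as a proof by contradiction (assume some $\mu_j > \lambda_j$ and derive $\sum\mu_i > \sum\lambda_i$), whereas you phrase it directly as ``a sum of nonnegative gaps is zero''; your observation that the simple-eigenvalue hypothesis is not actually needed for the inequality chase is also accurate.
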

\begin{proof}
  Since $P^TAP$ has simple eigenvalues, we have $tr(P^TAP) =
  \sum_{i=1}^{k}\mu_i$. Also, from Theorem \ref{Th:poincare_cor1}
  above, we have
  \begin{align}\label{Eq:trace}
    tr(P^TAP) = \sum_{i=1}^{k}\lambda_i.
  \end{align}
  We shall prove the hypothesis by contradiction. From
  \ref{Th:poincare}, we have $\mu_i \ge \lambda_i$. Let there exist
  $j$, $1 \le j \le k$, such that $\mu_i > \lambda_i$, then get
  \begin{align}
    tr(P^TAP) = \sum_{i=1}^{k}\mu_i > \sum_{i=1}^{k}\lambda_i
  \end{align}
  contradicting \eqref{Eq:trace}. Thus, we must have $\mu_i =
  \lambda_i$. The proof is complete.
\end{proof}
The theorem above tells us that if we are able to find a matrix $P$
such that $tr(P^TAP)$ is minimum, then the first $k$ smallest
eigenvalues of the matrix $A$ are simply the eigenvalues of the matrix
$P^TAP$ provided $P^TAP$ has simple eigenvalues.
\begin{theorem}
  If $A$ is a real symmetric $n \times n$ matrix with eigenvalues
  $\lambda_1 \le \lambda_2 \le \cdots \le \lambda_n$, and if the
  following conditions are satisfied \vspace{-5mm}
  \begin{packed_enum}
  \item $tr(P^TAP)$ is maximum and
  \item $P^TAP$ has simple eigenvalues
  \end{packed_enum}
  then we have
  \[ \mu_i = \lambda_{n-k+i}, \quad 1 \le i \le k, \] where $\mu_1 \le
  \mu_2 \le \cdots \le \mu_{k}$ are eigenvalues of $P^TAP$.
\end{theorem}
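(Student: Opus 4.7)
The plan is to mirror the argument used for the minimum case, swapping the direction of the Poincar\'e inequality and using the upper trace identity from Theorem \ref{Th:poincare_cor1}. The key objects are the same: the eigenvalues $\mu_1 \le \cdots \le \mu_k$ of $P^TAP$, the ordered eigenvalues of $A$, and the simplicity hypothesis, which lets us identify $\mathrm{tr}(P^TAP)$ with $\sum_{i=1}^k \mu_i$ (rather than with a multiplicity-weighted sum).

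First, I would record two identities. From the simplicity assumption and the definition of trace via the characteristic polynomial, $\mathrm{tr}(P^TAP) = \sum_{i=1}^k \mu_i$. From the maximum part of Theorem \ref{Th:poincare_cor1}, the maximality of $\mathrm{tr}(P^TAP)$ yields
\begin{equation}\label{Eq:tracemax}
\mathrm{tr}(P^TAP) = \sum_{i=1}^k \lambda_{n-k+i}.
\end{equation}
Next, I would invoke the upper half of the Poincar\'e separation inequality from Theorem \ref{Th:poincare}, namely $\mu_i \le \lambda_{n-k+i}$ for every $1 \le i \le k$.

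The finishing step is a contradiction argument identical in shape to the proof of the previous theorem, but flipped. Assume there is some index $j$ with $\mu_j < \lambda_{n-k+j}$. Combined with the pointwise bounds $\mu_i \le \lambda_{n-k+i}$ for the other indices, summing gives $\sum_{i=1}^k \mu_i < \sum_{i=1}^k \lambda_{n-k+i}$, which contradicts \eqref{Eq:tracemax} via the simplicity identity $\mathrm{tr}(P^TAP)=\sum \mu_i$. Hence $\mu_i = \lambda_{n-k+i}$ for all $i$.

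I do not anticipate a real obstacle here: everything reduces to citing Theorems \ref{Th:poincare} and \ref{Th:poincare_cor1} and performing the same strict-inequality-in-a-sum contradiction as in the minimum case. The only subtlety worth stating explicitly is the role of simplicity, which is what allows $\mathrm{tr}(P^TAP) = \sum \mu_i$ without multiplicities; without it one could still have trace equality while some $\mu_i$ strictly undershoots $\lambda_{n-k+i}$ and another overshoots (absorbed by multiplicity), so this hypothesis cannot be dropped.
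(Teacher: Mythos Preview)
Your argument is correct and follows the paper's proof exactly: both combine the maximum trace identity from Theorem~\ref{Th:poincare_cor1} with the upper Poincar\'e bound $\mu_i \le \lambda_{n-k+i}$ and finish via the same strict-inequality-in-a-sum contradiction (your version is in fact cleaner, since the paper's printed proof carries index typos, writing $\lambda_i$ where $\lambda_{n-k+i}$ is intended). One small caveat on your closing remark: since Poincar\'e already forbids any $\mu_i$ from \emph{overshooting} $\lambda_{n-k+i}$, the simplicity hypothesis is not actually needed for the argument---the paper invokes it for the same questionable reason, so you are matching its logic, but your claim that it ``cannot be dropped'' is not correct.
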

\begin{proof}
  Since $P^TAP$ has simple eigenvalues, we have $tr(P^TAP) =
  \sum_{i=1}^{k}\mu_i$. Also, from Theorem \ref{Th:poincare_cor1}
  above, we have
  \begin{align}\label{Eq:trace2}
    tr(P^TAP) = \sum_{i=1}^{k}\lambda_{n+i-k}.
  \end{align}
  We shall prove the hypothesis by contradiction. From
  \ref{Th:poincare}, we have $\mu_i \le \lambda_i$. Let there exist
  $j$, $1 \le j \le k$, such that $\mu_i < \lambda_i$, then we get
  \begin{align}
    tr(P^TAP) = \sum_{i=1}^{k}\mu_i < \sum_{i=1}^{k}\lambda_i
  \end{align}
  contradicting \eqref{Eq:trace2}. Thus, we must have $\mu_i =
  \lambda_i$. The proof is complete.
\end{proof}
The theorem above tells us that if we are able to find a matrix $P$
such that $tr(P^TAP)$ is maximum, then the $k$ largest eigenvalues of
the matrix $A$ are simply the eigenvalues of the matrix $P^TAP$
provided $P^TAP$ has simple eigenvalues. Determining first $k$
smallest or $k$ largest eigenvalues of a matrix is of prime importance
in many applications.

\begin{theorem}[Poincar\'e] Let $A$ be a real $m \times n$ matrix with
  singular values
  \begin{align*}
    \sigma_1(A) \ge \sigma_2(A) \ge \dots
  \end{align*}
  and let $U$ and $V$ be two matrices of order $m \times p$ and $n
  \times q$, respectively, such that $U^*U=I_p$ and $V^*V=I_q$. Let
  $B=U^*AV$ with singular values
  \begin{align*}
    \sigma_1(B) \ge \sigma_2(B) \ge \cdots
  \end{align*}
  then the singular values of $B$ are separated by the singular values
  of $A$ as follows
  \begin{align*}
    \sigma_i(A) \ge \sigma_{i}(B) \ge \sigma_{i+r}(A), i =
    1,2,\cdots,min\{m,n\}
  \end{align*}
  where $r = (m-p) + (n-q)$
\end{theorem}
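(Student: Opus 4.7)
The plan is to reduce the singular-value statement to the Hermitian (symmetric) Poincar\'e separation theorem already proved as Theorem \ref{Th:poincare}. The bridge is the standard Jordan--Wielandt (Hermitian dilation) trick: associate to any rectangular $m\times n$ matrix $M$ the Hermitian matrix
\[
  \tilde{M} = \begin{pmatrix} 0 & M \\ M^* & 0 \end{pmatrix},
\]
whose nonzero eigenvalues are exactly $\pm \sigma_j(M)$ for $j=1,\ldots,\min(m,n)$, padded with $|m-n|$ zeros in the middle of the spectrum. I would carry out this reduction for both $A$ and $B$ and then interlace.

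First I would write down $\tilde{A}$ (of order $m+n$) and $\tilde{B}$ (of order $p+q$). Next I would define the block-diagonal prolongation
\[
  P = \begin{pmatrix} U & 0 \\ 0 & V \end{pmatrix},
\]
which has size $(m+n)\times(p+q)$, and check in one line that $P^*P = I_{p+q}$ (using $U^*U=I_p$, $V^*V=I_q$) and that $P^*\tilde{A}P = \tilde{B}$ by a direct block multiplication. At this point Theorem \ref{Th:poincare} applies verbatim to the symmetric pair $(\tilde{A},\tilde{B})$ with the semi-orthogonal $P$, giving the eigenvalue interlacing $\lambda_j(\tilde{A}) \le \lambda_j(\tilde{B}) \le \lambda_{(m+n)-(p+q)+j}(\tilde{A})$ for $j=1,\ldots,p+q$, where the eigenvalues are listed in increasing order.

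The remaining step is purely bookkeeping: translate these interlacing inequalities, stated for the \emph{increasing} eigenvalues of the dilated matrices, back into inequalities for the \emph{decreasing} singular values of $A$ and $B$. Using that the $i$-th largest positive eigenvalue of $\tilde{A}$ is $\sigma_i(A)$, so $\sigma_i(A) = \lambda_{(m+n)-i+1}(\tilde{A})$ and similarly $\sigma_i(B) = \lambda_{(p+q)-i+1}(\tilde{B})$, I would substitute $j=(p+q)-i+1$ into the interlacing inequality. The upper bound collapses to $\sigma_i(B)\le\sigma_i(A)$, and the lower bound becomes $\sigma_i(B)\ge \lambda_{(p+q)-i+1}(\tilde{A})=\sigma_{i+r}(A)$ with $r=(m-p)+(n-q)$, exactly as claimed.

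The main obstacle, and the only real subtlety, is the index matching in this last step: one must be careful that the correspondence ``$i$-th largest positive eigenvalue $=$ $i$-th singular value'' survives the insertion of the $|m-n|$ (resp.\ $|p-q|$) spurious zeros in the middle of each dilated spectrum. This is handled cleanly by adopting the convention $\sigma_j(\cdot)=0$ for $j>\min\{\text{dimensions}\}$, which is consistent with the zero block of $\tilde{A}$ and $\tilde{B}$ and makes the substitution uniform for all $i$ in the stated range. Once that convention is fixed, the whole argument is a one-page calculation with no further difficulty.
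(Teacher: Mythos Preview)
Your argument is correct and is in fact the standard reduction of the singular-value interlacing theorem to the Hermitian case via the Jordan--Wielandt dilation; the block computation $P^{*}\tilde{A}P=\tilde{B}$ and the index substitution $j=(p+q)-i+1$ are exactly right, and your remark about the convention $\sigma_j(\cdot)=0$ beyond $\min\{\text{dimensions}\}$ is the right way to absorb the spurious zero eigenvalues. One small point worth making explicit: when $i+r>\max(m,n)$ the index $(p+q)-i+1$ lands in the \emph{negative} part of the spectrum of $\tilde{A}$, so the lower Poincar\'e bound gives only $\sigma_i(B)\ge -\sigma_{(p+q)-i+1}(A)$ rather than $\sigma_i(B)\ge \sigma_{i+r}(A)$; but in that regime $\sigma_{i+r}(A)=0$ by convention and $\sigma_i(B)\ge 0$ trivially, so the claimed inequality still holds.

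As for comparison with the paper: the paper does not give a proof at all---it simply cites \cite{rao}. Your proposal therefore supplies strictly more than what the paper contains. The dilation approach you chose is the natural one here because it recycles Theorem~\ref{Th:poincare} directly; an alternative route (also found in the literature) is to apply the Hermitian Poincar\'e theorem twice, first to $A^{*}A$ with the isometry $V$ and then to $AVV^{*}A^{*}$ (or a related object) with $U$, picking up the shifts $n-q$ and $m-p$ separately, but your one-shot dilation argument is cleaner and avoids the square-root bookkeeping.
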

\begin{proof}
  The theorem is proved in \cite{rao}.
\end{proof}

\section{Clustering to coarsening}
Our aim is to estimate the eigenvalues of the fine grid $A$ via the
eigenvalues of coarse grid ($P^TAP$). Thus, the first step is
clustering which then leads to the interpolation operator $P$ as
follows. First a set of aggregates $G_{i}$ are defined. There are
several different ways of doing aggregation (also described in
\cite{kum}), some of them are as follows:

\begin{itemize}
\item This approach
  is closely related to the classical AMG \cite{not} where one first
  defines the set of nodes $S_i$ to which $i$ is strongly negatively
  coupled, using the Strong/Weak coupling threshold $\beta$:
  \[
  S_i = \{ \, j \neq i \mid a_{ij} < -\beta \ \text{max}|a_{ik}| \, \}.
  \]
  Then an unmarked node $i$ is chosen such that priority is given to
  the node with minimal $M_i$, here $M_i$ being the number of unmarked
  nodes that are strongly negatively coupled to $i$ \cite{not}.
\item Several graph partitioning methods exists. Aggregation for AMG
  is created by calling a graph partitioner with number of aggregates
  as an input. The subgraph being partitioned are considered as
  aggregates. For instance, in this paper we use this approach by
  giving a call to the METIS graph partitioner routine
  METIS\_PartGraphKway with the graph of the matrix and number of
  partitions as input parameters. The partitioning information is
  obtained in the output argument ``part". The part array maps a given
  node to its partition, i.e., part($i$) = $j$ means that the node $i$
  is mapped to the $jth$ partition. In fact, the part array
  essentially determines the interpolation operator $P$. For instance,
  we observe that the ''part`` array is a discrete many to one
  map. Thus, the $i$th aggregate $G_{i}=\text{part}^{-1}(i)$, where
  \[
  \text{part}^{-1}(i) = \{ \, j \in [1,\, N] \enspace \mid \enspace
  \text{part}(j)=i\,\}
  \]
%  Such graph matching techniques were explored in \cite{kim,bra,ras}.
\item K-means clustering (see MATLAB): This clustering is defined in
  MATLAB and it produces random clustering i.e., a random ``part'' array
  defined above.
\end{itemize}
Let $J$
be the number of such aggregates, then the interpolation matrix $P$ is
defined as follows
\begin{equation} \label{interp}
P_{ij} =
\begin{cases}
  1, &\text{if $i \in G_{j}$,}\\
  0, &\text{otherwise,}\\
\end{cases}
\end{equation}
Here, $1 \le i \le N, \, 1 \le j \le J$, $N$ being the size of the
original coefficient matrix $A$.  Let $N=4$ be the size of $A$. Let
there be two aggregates, $G_1 = \{ \, 1, 3 \, \}$ and $G_2 = \{ \, 2,
4 \, \}$, then the restriction operator $P^{T}$ is defined as follows
$P^{T}=\left[\begin{array}{cccc} 1 & 0 & 1 & 0 \\ %\hline
    0 & 1 & 0 & 1
  \end{array}
\right]$.  Further, we assume that the aggregates $G_{i}$ are such
that
\begin{equation} \label{aggr}
G_{i}\cap G_{j}=\phi,~ \text{for}~ i \neq j~ \text{and}~
\cup_iG_{i}= [1,N]
\end{equation}
Here $[1,\, N]$ denotes the set of integers from $1$ to $N$.
 Notice that the matrix $P$ defined above is an $N \times J$ matrix but since it
has only one non-zero entry (which are ``one'') per row, the matrix
can be defined by a single array containing the indices of the non-zero entries. 
The coarse grid matrix $A_c$ may be computed as follows
\[
(A_c)_{ij} = \sum_{k \in G_i} \sum_{l \in G_j} a_{kl}
\]
where $1 \le i, \ j \le N_c$, and $a_{kl}$ is the $(k,l)th$ entry of $A$.

\section{Randomized coarsening and its applications}
In this section, we list the algorithms that may lead to an
approximation of eigenvalues or singular values. In algorithm
(\ref{algo_mult_coarse}), we show the steps for obtaining the
eigenvalues of the input matrix $A$. Here, $\mu^i_j$ denotes the $jth$
eigenvalue of the $ith$ coarse grid. Later in the algorithm at step
(7) $\mu^i_j$ is used as a shift to obtain the eigenvalue of the input
matrix $A$. Since, Poincar\'e separation theorem tells us that
$\mu^i_j$ will lie between two eigenvalues of the input matrix $A$, we
expect it to converge to nearest one. However, it is possible that
some other eigenvalue of other coarse grid also converges to the same
eigenvalue and this redundant computation is inherent in this
approach. In Algorithm (\ref{svd_algo_mult_coarse}), similar algorithm
related to singular values is shown. Notice here that two
interpolation matrices namely $U$ and $V$ are needed. The procedure
for obtaining them is same as for $P$ except that we make use of two
random clustering to construct the coarse grid matrix $U_i^*AV_i$. For
clustering, we use of the ``kmeans'' clustering of MATLAB.

\begin{algorithm}[]
  \caption{Eigenvalue estimate using multiple coarse grid}
  \label{algo_mult_coarse}
  \begin{algorithmic}[1]
    \STATE INPUT: $A$, $J$, $k$ \STATE OUTPUT: $\Lambda=$ eigenvalues of $A$
      
    \FOR{$i$ = 1 to $J$} %\STATE // parallel for
      
    \STATE $A^i_c = P^T_i A P_i$
      
    \STATE Extract $\{\mu^i_1, \mu^i_2, \mu^i_3, \dots \mu^i_k\}$ =
    eigenvalues($A^i_c$)
      
    \FOR{$j$ = 1 to $k$}
    % \STATE // parallel for
    \STATE $\lambda^{i}_j$ = eigenvalues($A$, $\mu^i_j$) // $\mu^i_j$ is
    the shift
    \ENDFOR
    % Here $1$ denotes number of eigenvalues required and $\mu_i$ is
    % the
    % proposed shift
    \ENDFOR
      
    \STATE $\Lambda = \{ \lambda^1_1, \lambda^1_2, \lambda^1_3, \dots,
    \lambda^1_k \} \cup \{\lambda^2_1, \lambda^2_2, \lambda^2_3,
    \dots, \lambda^2_k \} \cup \dots \cup \{ \lambda^J_1, \lambda^J_2,
    \lambda^J_3, \dots, \lambda^J_k\}$
  \end{algorithmic}
\end{algorithm}

\begin{algorithm}[]
  \caption{Singular value estimate using multiple coarse grid}
  \label{svd_algo_mult_coarse}
  \begin{algorithmic}[1]
    \STATE INPUT: $A$, $J$, $k$ \STATE OUTPUT: $\Sigma=$singular value
    estimates of $A$
      
    \FOR{$i$ = 1 to $J$} %\STATE // parallel for
      
    \STATE $A^i_c = U^*_i A V_i$
      
    \STATE Extract $\{\sigma^i_1, \sigma^i_2, \sigma^i_3, \dots
    \sigma^i_k\}$ = eigenvalues($A^i_c$)
      
    \FOR{$j$ = 1 to $k$}
    % \STATE // parallel for
    \STATE $\Sigma^{i}_j(A)$=singularvalue($A$, $\sigma^i_j$) //
    $\sigma^i_j$ is the shift
    \ENDFOR
    % Here $1$ denotes number of eigenvalues required and $\sigma_i$
    % is the
    % proposed shift
    \ENDFOR
      
    \STATE $\Sigma = \{ \Sigma^1_1, \Sigma^1_2, \Sigma^1_3, \dots,
    \Sigma^1_k \} \cup \{\Sigma^2_1, \Sigma^2_2, \Sigma^2_3, \dots,
    \Sigma^2_k \} \cup \dots \cup \{ \Sigma^J_1, \Sigma^J_2,
    \Sigma^J_3, \dots, \Sigma^J_k\}$
  \end{algorithmic}
\end{algorithm}

In Algorithm (\ref{extreme_eig}) and (\ref{extreme_sig}), we present
special cases of the algorithms presented in Algorithms
(\ref{algo_mult_coarse}) and (\ref{svd_algo_mult_coarse}) to compute
extreme eigenvalues and singular values respectively. We simply
extract only the largest and smallest eigenvalues of all coarse grids.
In Figure (\ref{svd_rand}), we plot the singular values for rand(50)
matrix available in MATLAB for 5 coarse grid samples. The coarse grid
eigenvalues are then used as shift to determine the fine grid
eigenvalues. In figure (\ref{svd_rand_zoom}), we see in detail how the
shifts converge to the actual eigenvalues. 

  \begin{figure}
    \caption{Poincar\'e separates for rand(50), $N_c = 22$, J=5}
    \label{svd_rand}
    \begin{center}
      \includegraphics[scale=0.2]{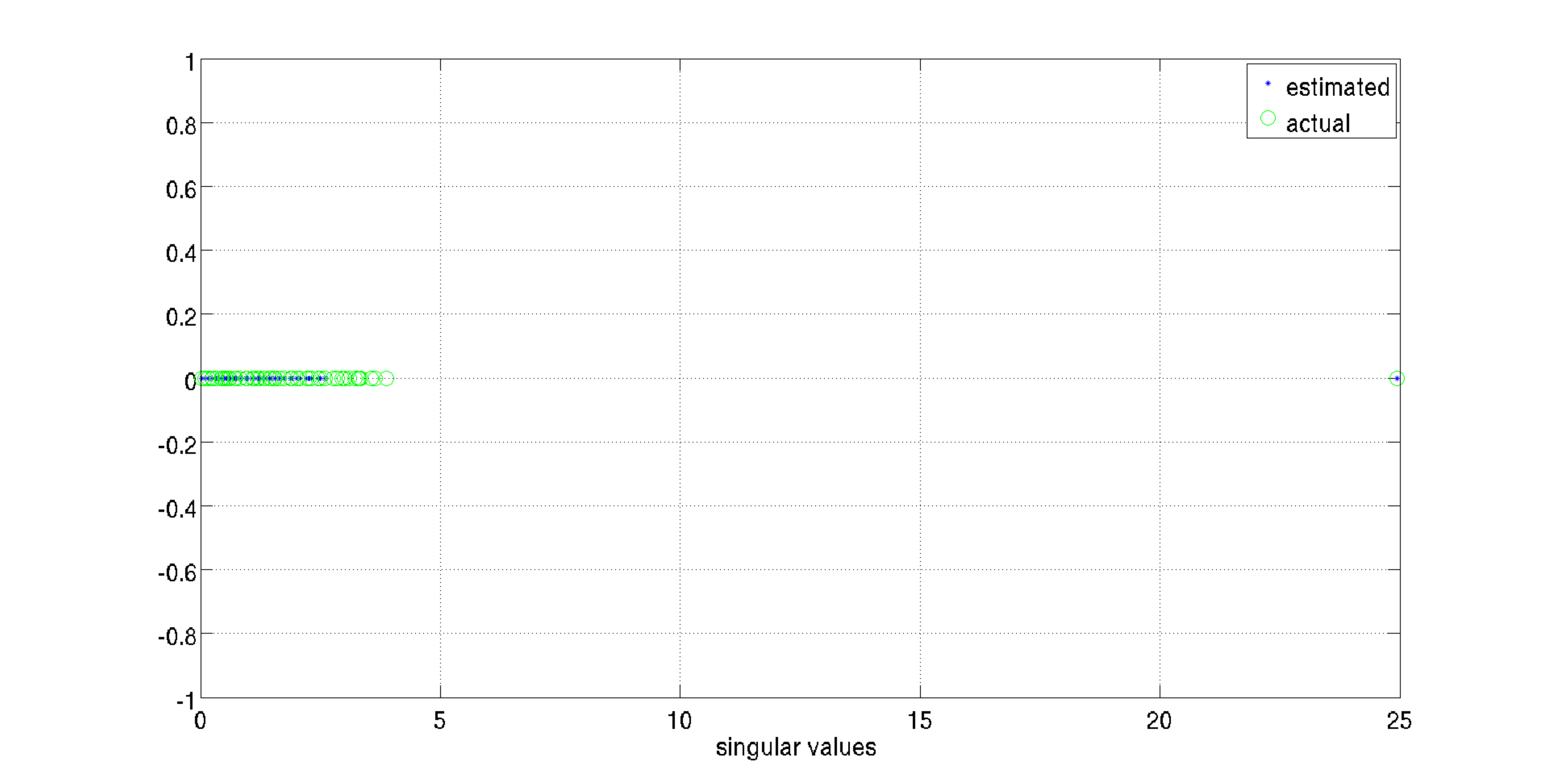}
    \end{center}
  \end{figure}

  \begin{figure}
    \caption{Poincar\'e separates for rand(50), $N_c = 22$, J=5}
    \label{svd_rand_zoom}
    \begin{center}
      \includegraphics[scale=0.2]{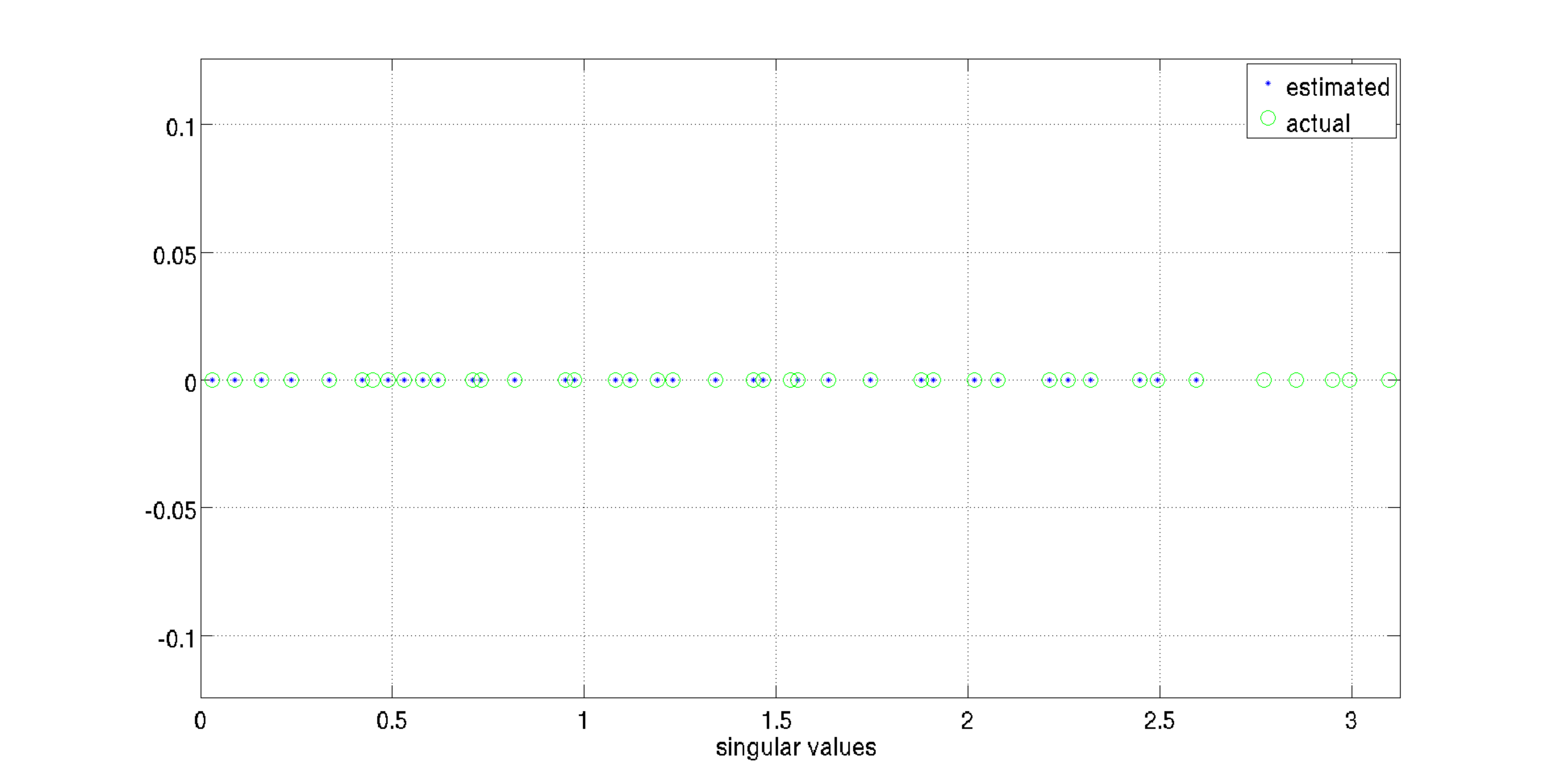}
    \end{center}
  \end{figure}

\begin{algorithm}[]
  \caption{Extreme eigenvalues using multiple coarse grid}
  \label{extreme_eig}
  \begin{algorithmic}[1]
    \STATE INPUT: $A$, $J$ \STATE OUTPUT: $\{\Lambda_{max},
    \Lambda_{min}\}=$ approx. max and min eigenvalues of $A$
      
    \FOR{$i$ = 1 to $J$} %\STATE // parallel for
      
    \STATE $A^i_c = P^T_i A P_i$ // perform coarsening
      
    \STATE $\mu^i_{max}$ = eigmax($A^i_c$) // just find the largest
    eigenvalue \STATE $\mu^i_{min}$ = eigmin($A^i_c$) // just find the
    smallest eigenvalue
    \ENDFOR
      
    \STATE $\Lambda_{max} = max\{ \mu^1_{max}, \mu^2_{max},
    \mu^3_{max}, \dots, \mu^J_{max} \}$ \STATE $\Lambda_{min} = min\{
    \mu^1_{min}, \mu^2_{min}, \mu^3_{min}, \dots, \mu^J_{min} \}$
  \end{algorithmic}
\end{algorithm}

\begin{algorithm}[]
  \caption{Extreme singular values using multiple coarse grid}
  \label{extreme_sig}
    \begin{algorithmic}[1]
      \STATE INPUT: $A$, $J$
      \STATE OUTPUT: $\{\Sigma_{max}, \Sigma_{min}\}=$ approx. max and min singular values of $A$
      
      \FOR{$i$ = 1 to $J$} %\STATE // parallel for
      
      \STATE $A^i_c = U^*_i A V_i$           // perform coarsening
      
      \STATE $\sigma^i_{max}$ = singularmax($A^i_c$) // just find the largest singular value
      \STATE $\sigma^i_{min}$ = singularmin($A^i_c$) // just find the smallest singular value
      \ENDFOR
      
      \STATE $\Sigma_{max} = max\{ \sigma^1_{max}, \sigma^2_{max}, \sigma^3_{max}, \dots, \sigma^J_{max} \}$ 
      \STATE $\Sigma_{min} = min\{ \sigma^1_{min}, \sigma^2_{min}, \sigma^3_{min}, \dots, \sigma^J_{min} \}$ 
    \end{algorithmic}
  \end{algorithm}

       % bibliography

\begin{thebibliography}{99}
\bibitem{kar} G. Karypis, V. Kumar, {\em A fast and high quality
    multilevel scheme for partitioning irregular graphs}, SIAM
  J. Sci. Comp., (1999), 359-392.
  
\bibitem{bell} R. E. Bellman, {\em Introduction to matrix analysis,
    2nd ed.}, New York: McGraw-Hill, p. 117, 1970
  
\bibitem{not} Y. Notay, {\em An aggregation based algebraic
    multigrid}, Num. Lin. Alg. Appl., vol 18, pp 539-564, 2011
  
\bibitem{rao} C. R. Rao and M. B. Rao {\em Matrix algebra and its
    applications to statistics and econometrics}, World scientific
  publishing, 2004

 \bibitem{saad96} Y. Saad, \emph {\it Iterative Methods for Sparse
      Linear Systems}, PWS publishing company, Boston, MA, 1996.

  \bibitem{sle} G. L. G. Sleijpen and H. A. van der Vorst {\em A
      Jacobi-Davidson iteration method for linear eigenvalue
      problems}, SIAM J. Matrix Anal. Appl., vol 17, pp 401-425, 1996.

   \bibitem{kum} P. Kumar, {\em Aggregation based on graph matching and
      inexact coarse grid solve for algebraic multigrid},
    arXiv:1105.3468v5, 2011

  
\end{thebibliography}
\end{document}